\DeclareMathAlphabet{\mathpzc}{OT1}{pzc}{m}{it}
\newcommand{\Trans}{\operatorname{Trans}}
\newcommand{\Aut}{\operatorname{Aut}}
\newcommand{\Hy}{\mathcal{H}}
\newcommand{\RR}{\mathbb{R}}
\newcommand{\NN}{\mathbb{N}}
\newcommand{\SL}{\operatorname{SL}}
\newcommand{\PSL}{\operatorname{PSL}}
\newcommand{\SO}{\operatorname{SO}}
\newcommand{\colim}{\operatorname{colim}}
\newcommand{\Stab}{\operatorname{Stab}}
\newcommand{\s}{\mathpzc{s}}
\newcommand{\tr}{\mathpzc{t}}
\newcommand\restr[2]{{% we make the whole thing an ordinary symbol
  \left.\kern-\nulldelimiterspace % automatically resize the bar with \right
  #1 % the function
  \vphantom{\big|} % pretend it's a little taller at normal size
  \right|_{#2} % this is the delimiter
  }}
\newtheorem{theorem}{Theorem}
\newtheorem{lemma}[theorem]{Lemma}
\newtheorem{proposition}[theorem]{Proposition}
\theoremstyle{definition}
\newtheorem{definition}[theorem]{Definition}
\title{An extension theorem for embedded Riemannian symmetric spaces of non-compact type and an application to their universal property}
\author{Julius Gr\"uning, Ralf K\"ohl}
\date{\today}
\begin{document}

\maketitle
\thispagestyle{empty}

\begin{abstract}
By \cite{FHHK} it is known that a geodesic $\gamma$ in an abstract reflection space $X$ (in the sense of Loos, without any assumption of differential structure) canonically admits an action of a $1$-parameter subgroup of the group of transvections of $X$. In this article, we modify these arguments in order to prove an analog of this result stating that, if $X$ contains an embedded hyperbolic plane $\Hy\subset X$, then this yields a canonical action of a subgroup of the transvection group of $X$ isomorphic to a perfect central extension of $\PSL_2(\RR)$. This result can be further extended to arbitrary Riemannian symmetric spaces of non-compact type $Y$ lying in $X$ and can be used to prove that a Riemannian symmetric space and, more generally, the Kac--Moody symmetric space $G/K$ for an algebraically simply connected two-spherical Kac--Moody group $G$, as defined in \cite{FHHK}, satisfies a universal property similar to the universal property that the group $G$ satisfies itself.
\end{abstract}

\section{Introduction}
In \cite{FHHK} a \emph{symmetric space} is defined (following Ottmar Loos \cite{Loos}) in a very general fashion, allowing for many different contexts, and is then applied to the Kac--Moody setting. The definition given -- and the one which is also used in this article -- is the following:
\begin{definition}
Let $X$ be a topological space and $\mu:X\times X\to X$, $(x,y)\mapsto x.y$ a continuous map satisfying the following axioms:
$$\begin{array}{lrl}
\text{(RS1)}& \forall x\in X:&x.x=x \\
\text{(RS2)}& \forall x,y\in X:& x.(x.y)=y\\
\text{(RS3)}& \forall x,y,z\in X:& x.(y.z)=(x.y).(x.z)\\
\text{(RS4)}& \forall x,y\in X:& x.y=y \Rightarrow x=y
\end{array}$$
Then $X$ is called a \emph{(topological) symmetric space} and $\mu$ is called its \emph{reflection map}. A space satisfying only (RS1)--(RS3) is called a \emph{(topological) reflection space}.
\end{definition}

A \emph{morphism} between reflection spaces $X\to Y$ is a continuous map $\phi:X\to Y$ satisfying $$\phi(x.y)=\phi(x).\phi(y)$$ for all $x,y\in X$. In particular, for each $x \in X$ the \emph{elementary reflection} $$\begin{array}{rcl}\s_x:X&\to& X\\ y&\mapsto& x.y\end{array}$$ provides a morphism of $X$, in fact an automorphism.
Note that a morphism $\phi : X \to Y$ of reflection spaces allows one to transport the elementary reflections of $X$ to elementary reflections of $Y$: $$\s_{\phi(x)}(\phi(y)) = \phi(x).\phi(y) = \phi(x.y) = \phi(\s_x(y)).$$ That is, it is meaningful to denote $\phi(\s_x) := \s_{\phi(x)}$. 

This article is mainly concerned with the category of pointed reflection spaces: every reflection space is assumed to have a chosen base point, which is most of the time called $o$ -- and morphisms other than reflections are usually required to map basepoints to basepoints.

\smallskip
The subgroup of $\mathrm{Aut}(X)$ generated by the elementary reflections is called the \emph{main group} $G(X)$ of $X$. We stress that the assignment $\s_x\mapsto\s_{\phi(x)}$ induced by a morphism $\phi : (X,o) \to (Y,o)$ as discussed above does not necessarily induce a group homomorphism $G(X)\to G(Y)$.

The main group has a subgroup of index $\leq 2$ called the \emph{group of transvections} of $X$, which is generated by products of pairs of elementary reflections:
$$\Trans(X):=\langle \s_x\circ s_y\mid x,y\in X\rangle.$$
Given a basepoint $o\in X$, one may also define \emph{elementary transvections}
$$\tr_x:=\s_x\circ \s_o,$$ and one easily sees that these also generate $\Trans(X)$.
Given an embedding $\iota : (Y,o) \hookrightarrow (X,o)$ of pointed reflection spaces, we can consider the \emph{restricted group of transvections} of $X$ to $Y$, which is a subgroup of $\Trans(X)$: $$\restr{\Trans(X)}{Y}:=\langle \tr_{\iota(y)} \in\Trans(X)\mid y \in Y\rangle = \langle \s_{\iota(y)} \circ \s_o \in\Trans(X) \mid y \in Y \rangle.$$

%By the \emph{conjugation formula} (Lemma 2.4 of \cite{FHHK}) for any $x\in X$ and any $\alpha\in\Aut(X)$, we have that $\s_x^\alpha)=\s_{\alpha(x)}$. This easily implies that the natural map $$\restr{\Trans(X)}{Y}\to\Trans(Y)$$ has central kernel.

With this wording, proposition 2.19 of \cite{FHHK} then implies, that if $\gamma\subset X$ is a geodesic (that is, an embedded reflection space $(\mathbb{R}, x.y := 2x-y)$), then $$\restr{\Trans(X)}{\gamma}\cong (\RR,+).$$ In \cite{Neeb}, theorem 1.6, a variation of this result is stated and proved that also encompasses not necessarily injective morphisms $(\mathbb{R}, x.y := 2x-y) \to X$.

\medskip

If instead of a geodesic one considers an embedded Riemannian symmetric space $Y \hookrightarrow X$, then unlike for geodesics one cannot expect $\Trans(Y)$ to embed into $\Trans(X)$. Indeed, for instance the element $$\begin{pmatrix} -1 & 0 & 0 \\ 0 & -1 & 0 \\ 0 & 0 & 1 \end{pmatrix} \in \mathrm{SL}_3(\mathbb{R})$$ acts non-trivially on the symmetric space $\mathrm{SL}_3(\mathbb{R})/\mathrm{SO}(3)$, but trivially on the hyperbolic plane embedded into the upper left corner.

This problem arises from the fact that centres of nested semisimple Lie groups generally do not nest. 
Our first theorem states that this in fact is the only obstruction.
%\begin{theorem}\label{th1}
%Let $X$ be a reflection space and $\Hy\subset X$ an embedded hyperbolic plane. Then
%$\restr{\Trans(X)}{\Hy}$ is a perfect central extension of $\PSL_2(\RR)$.
%\end{theorem}
%For concrete computations within $\Hy$, we use the hyperboloid model, $\Hy\cong \SL_2(\RR)/\SO(2)$, with the reflection map $$(g\SO(2),h\SO(2))\mapsto gg^T(h^T)^{-1}\SO(2).$$ This is a special case of the construction given in proposition 4.1 of \cite{FHHK}, which we recall in \cref{G/Kconstruction} below and will refer to as the \emph{"$G/K$"-construction}.
%
%Using a structure result of Steinberg (\cite{Steinberg}) for semisimple real Lie groups this can be further extended to the following
\begin{theorem}\label{th1} \label{th2}
Let $X$ be a reflection space and $Y\subset X$ an embedded Riemannian symmetric space of non-compact type. Then
$\restr{\Trans(X)}{Y}$ is a perfect central extension of $\Trans(Y)$.
\end{theorem}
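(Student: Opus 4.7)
The natural candidate for presenting $\restr{\Trans(X)}{Y}$ as a central extension of $\Trans(Y)$ is the restriction map $\pi : \restr{\Trans(X)}{Y} \to \Trans(Y)$, $g \mapsto \restr{g}{\iota(Y)}$. Because $\iota$ is a morphism of reflection spaces, the set $\iota(Y)$ is stable under each elementary reflection $\s_{\iota(y)}$, with restriction (up to the identification $\iota(Y)\cong Y$) equal to $\s_y$. Consequently every element of $\restr{\Trans(X)}{Y}$ preserves $\iota(Y)$, and $\pi$ sends the generator $\tr_{\iota(y)}=\s_{\iota(y)}\s_o$ to $\tr_y$. Thus $\pi$ is a well-defined surjective homomorphism onto $\Trans(Y)$.

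Centrality of $\ker\pi$ is elementary. For any automorphism $g$ of a reflection space and any $x$, axiom (RS3) yields $g\circ\s_x\circ g^{-1}=\s_{g(x)}$. If $g\in\ker\pi$, then $g$ fixes $\iota(Y)$ pointwise, and in particular fixes $o$; hence $g$ commutes with every $\s_{\iota(y)}$ and with $\s_o$, and so with every generator $\tr_{\iota(y)}$. This places $\ker\pi$ in the centre of $\restr{\Trans(X)}{Y}$, producing a central extension $1\to\ker\pi\to\restr{\Trans(X)}{Y}\to\Trans(Y)\to 1$.

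The substantive content is perfection. My plan is to exhibit $\restr{\Trans(X)}{Y}$ as a quotient of a \emph{known} perfect central extension of $\Trans(Y)$, so that perfection is inherited from the quotient. The natural candidate is the simply-connected Lie cover $\widetilde G$ of $\Trans(Y)$, which is perfect since $\Trans(Y)$ is a centre-free semisimple Lie group of non-compact type. This requires extending the inclusion $\iota:Y\hookrightarrow X$ to an action of $\widetilde G$ on $X$ whose image in $\Trans(X)$ is precisely $\restr{\Trans(X)}{Y}$, i.e.\ whose values on lifts of the $\tr_y$ are the $\tr_{\iota(y)}$. Following the strategy announced in the abstract, I would first handle the case $Y=\Hy$, an embedded hyperbolic plane, adapting proposition 2.19 of \cite{FHHK} (which treats geodesics) from the abelian setting of $(\RR,+)$ to the non-abelian setting of $\PSL_2(\RR)$. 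This produces a canonical action on $X$ of a perfect central extension of $\PSL_2(\RR)$, hitting exactly $\restr{\Trans(X)}{\Hy}$. For general $Y$, I would decompose $Y$ into its irreducible de~Rham factors and use that every irreducible Riemannian symmetric space of non-compact type is generated, through the basepoint, by its embedded hyperbolic planes. The local hyperbolic-plane lifts then assemble into the desired surjection $\widetilde G\twoheadrightarrow\restr{\Trans(X)}{Y}$.

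The main obstacle is the hyperbolic-plane step. The geodesic argument of \cite{FHHK} produces a one-parameter subgroup essentially by an exponentiation/continuity argument using commutativity of the target group $(\RR,+)$; passing to $\PSL_2(\RR)$ one must manufacture candidate lifts in $\Trans(X)$ of an entire generating set (say two opposite unipotent subgroups together with a torus) and verify enough relations among them, notably some form of Steinberg-type relations, to recognise the resulting subgroup as a central quotient of a perfect central extension of $\PSL_2(\RR)$ rather than just an arbitrary group containing the obvious generators.
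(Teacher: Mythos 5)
Your reduction to perfection is correct and coincides with the paper's own: the restriction map $\restr{\Trans(X)}{Y}\to\Trans(Y)$ is a surjection, and its kernel is central by exactly the conjugation-formula argument you give (this is \cref{centralextensionlemma}). The genuine gap is that the perfection step --- which you yourself identify as the substantive content --- remains a plan, and the plan points in a direction that is both harder than necessary and close to circular. To exhibit $\restr{\Trans(X)}{\Hy}$ as a quotient of a known perfect central extension of $\PSL_2(\RR)$ you would have to verify that the defining relations of that extension hold among your candidate lifts inside $\Trans(X)$; but the only relations available there are consequences of the conjugation formula ${}^{\s_x}(\s_y)=\s_{x.y}$, and you give no indication of how Steinberg-type relations would follow from it. Note also that the Lie-theoretic simply connected cover would not suffice in any case: a perfect central extension of the abstract group $\Trans(Y)$ is a quotient of the \emph{algebraic} universal central extension, whose kernel $H_2(\Trans(Y);\ZZ)$ is far larger than $\pi_1(\Trans(Y))$. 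In fact, the statement that the universal central extension acts on $X$ through transvections is what the paper \emph{deduces from} \cref{th2} in the proof of \cref{th3}; using it as the route to \cref{th2} puts the cart before the horse unless you construct the action independently, which is precisely the unexecuted step.

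The paper's actual argument is more elementary and never identifies the extension. Each shear matrix is written as a product $A(t)B(t)C$ of three explicit symmetric positive definite matrices (following Ballantine, \cref{matrixlemma}), so that the corresponding products of elementary transvections $x_\pm(t)$ act on $\Hy$ by shears and, together with their $\s_o$-conjugates, generate $\restr{\Trans(X)}{\Hy}$ (\cref{generatingprop,generatorslemma}). The matrix identities $CA(t)C=A(t/2)$ and $C^{-1}B(t)C^{-1}=B(t/2)$ then yield, purely via the conjugation formula, the commutator identity $[\tr_{[c]},x_+(t)]=x_+(t/2)\cdot x_+(t)^{-1}$, and the right-hand sides again form a generating set; hence the commutator subgroup is everything. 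The passage from $\Hy$ to general $Y$ is by Steinberg generation of $G$ by $\SL_2(\RR)$-subgroups through the basepoint, much as you anticipate (the de~Rham decomposition plays no role). To complete your write-up, the missing ingredient is such an explicit commutator computation inside $\restr{\Trans(X)}{\Hy}$, or some substitute for it.
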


This extension theorem yields a result about a universal property that Riemannian symmetric spaces of non-compact type and, more generally, the unreduced Kac--Moody symmetric space of a two-spherical Kac--Moody group satisfies.

Recall that any algebraically simply connected semisimple split real Lie group or any algebraically simply connected two-spherical split real Kac--Moody group $G$ may be defined as the colimit (in the category of Hausdorff topological groups) of a diagram $\delta$ given by copies of $\SL_2(\RR)$ and algebraically simply connected rank-$2$ split real Lie groups, cf.~Theorem~7.22 of \cite{HKM}.

By choosing a Cartan--Chevalley involution of $G$ that normalizes the embedded rank-$1$ and rank-$2$ subgroups, this yields a diagram $\delta'$ in the category of pointed topological symmetric spaces consisting of copies of hyperbolic planes and rank-$2$ Riemannian symmetric spaces of non-compact type.

Our second main result confirms that the colimit of $\delta'$ exists and coincides with the Riemannian resp.~ Kac--Moody symmetric space $G/K$.

\begin{theorem}\label{th3}
Let $G$ be an irreducible algebraically simply connected semisimple split real Lie group or an irreducible algebraically simply connected two-spherical split real Kac--Moody group, $\theta$ its Cartan--Chevalley involution, $K:=G^\theta$ the fixed point set of $\theta$, and $\delta'$ the canonical diagram of pointed Riemannian symmetric spaces of ranks one and two embedded into $(G/K,o=K)$. Then $$(G/K,o) \cong\colim(\delta').$$
\end{theorem}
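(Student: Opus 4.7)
The plan is to leverage the colimit description $G = \colim(\delta)$ from Theorem~7.22 of \cite{HKM} to convert any cocone over $\delta'$ into a continuous $G$-action on its target. That $(G/K,o)$ itself is a cocone is straightforward: since the Cartan--Chevalley involution $\theta$ normalizes each rank-one and rank-two vertex subgroup $G_v \leq G$ by assumption, the associated symmetric subspaces $v' = G_v/(K\cap G_v)$ embed as pointed sub-reflection spaces $\iota_v \colon v' \hookrightarrow G/K$ compatibly with the arrows of $\delta'$.

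Let $(Z,(\phi_v)_{v\in\delta'})$ be another cocone. For each vertex $v'\in\delta'$ corresponding to $G_v\in\delta$, Theorem~\ref{th2} exhibits $\restr{\Trans(Z)}{v'}$ as a perfect central extension of $\Trans(v')$; combined with the canonical surjection $G_v \twoheadrightarrow \Trans(v')$, the algebraic simple-connectedness of $G_v$ should yield a continuous lift $\rho_v \colon G_v \to \restr{\Trans(Z)}{v'} \subseteq \Trans(Z)$. After checking that the $\rho_v$ are compatible with the arrows of $\delta$, they assemble into a cocone in the category of Hausdorff topological groups, and the universal property of $G$ then induces a continuous homomorphism $\rho \colon G \to \Trans(Z)$. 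Setting $z := \phi_v(o)$ (independent of $v$ since each $\phi_v$ is pointed), I would define $\Phi \colon G/K \to Z$ by $gK \mapsto \rho(g).z$. Well-definedness reduces to $\rho(K).z = z$, which follows from the observation that for $k \in K \cap G_v$ the element $\rho_v(k)$ acts on $\phi_v(v')\subset Z$ through the quotient $\restr{\Trans(Z)}{v'}\twoheadrightarrow\Trans(v')$, whose image fixes $o_{v'}$ and hence its $\phi_v$-image $z$. The morphism property of $\Phi$, the compatibility $\Phi\circ\iota_v = \phi_v$, and continuity are then routine; uniqueness follows from the fact that $G$ is generated by the $G_v$, so $G/K$ is generated as a reflection space by $\bigcup_v \iota_v(v')$.

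The technical heart of the proof lies in the lifting step $G_v \to \restr{\Trans(Z)}{v'}$ and its compatibility across the arrows of $\delta$. Abstractly, a perfect central extension of $\Trans(v')$ need not admit a canonical lift from an algebraically simply connected cover; rather, the lift should emerge from the explicit description of $\restr{\Trans(Z)}{v'}$ provided by the proof of Theorem~\ref{th2} in terms of products of elementary reflections $\s_{\phi_v(x)}$, identifying it as a specific quotient tied to the Steinberg-type generators of $G_v$. Compatibility along an arrow $v \to w$ of $\delta$ (typically a rank-one subgroup sitting inside a rank-two one) would then follow from the natural inclusion $\restr{\Trans(Z)}{v'} \hookrightarrow \restr{\Trans(Z)}{w'}$ combined with uniqueness of the lifts, and establishing this compatibility rigorously is where I expect the bulk of the technical work to lie.
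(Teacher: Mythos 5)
Your overall architecture (use the cocone over $\delta'$ to build a group action on the target, then push the base point around) matches the paper's, but the lifting step you flag as the ``technical heart'' is a genuine gap, and your proposed resolution does not work. You want a homomorphism $\rho_v \colon G_v \to \restr{\Trans(Z)}{v'}$ lifting the canonical surjection $G_v \twoheadrightarrow \Trans(v')$. Algebraic simple connectedness of $G_v$ does not provide this: for a rank-one vertex, $G_v \cong \SL_2(\RR)$ is only the degree-two central extension of $\Trans(v') \cong \PSL_2(\RR)$, whereas \cref{th2} only guarantees that $\restr{\Trans(Z)}{v'}$ is \emph{some} perfect central extension of $\PSL_2(\RR)$ --- it may sit strictly above $\SL_2(\RR)$, e.g.\ be (a quotient of) the universal cover of $\SL_2(\RR)$, whose kernel over $\PSL_2(\RR)$ is torsion-free. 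Then no lift of $-I$ (order $2$ in $\SL_2(\RR)$) exists, so the required section of the pulled-back central extension fails; recall that a perfect central extension splits only if it is trivial. Your fallback --- identifying $\restr{\Trans(Z)}{v'}$ as ``a specific quotient tied to the Steinberg-type generators of $G_v$'' --- cannot succeed for the same reason: $Z$ is an \emph{arbitrary} cocone, so $\restr{\Trans(Z)}{v'}$ need not be a quotient of $G_v$ at all.

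The paper's fix is to abandon $G_v$ in favour of the \emph{universal} central extensions $\widetilde G_i$, $\widetilde G_{\{i,j\}}$, which by universality do surject onto every perfect central extension of $\Trans(v')$, hence onto $\restr{\Trans(Z)}{v'}$; note that $G_i/K_i \cong \widetilde G_i/\widetilde K_i$, so nothing is lost on the level of symmetric spaces. The compatibility across the arrows of $\delta$, which you correctly identify as the other hard point, is then not checked by hand but delegated to Theorem~B of Caprace's paper on central extensions of two-spherical Kac--Moody groups: the colimit of the diagram $\widetilde\delta$ of universal central extensions is the universal central extension $\widetilde G$ of $G$. This yields $T \colon \widetilde G \to \Trans(Z)$, and the desired morphism is $g\widetilde K \mapsto T(g)(o)$ on $\widetilde G/\widetilde K \cong G/K$, with well-definedness via $\Stab_{\Trans(Z)}(o) = C_{\Trans(Z)}(\s_o)$ and the fact that $\s_o$ realizes $\widetilde\theta$ on $\widetilde G/\widetilde K$. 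Your uniqueness argument (generation of $G/K$ by the rank-one subspaces) agrees with the paper's. Without the passage to universal central extensions and the Caprace input, your construction of $\rho$ does not get off the ground.
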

\vspace{1cm}
\textbf{Acknowledgments:} The authors express their gratitude to the Deutsche Forschungsgemeinschaft for funding the research leading to this article via the project KO4323/13-1.

\section{Semisimple Lie groups and Kac--Moody groups}
As noted in the introduction, an algebraically simply connected $2$-spherical split real Kac--Moody group may be defined as a certain colimit. This approach is due to Abramenko--M\"uhlherr \cite{AM} as a generalization of Tits original observation \cite{Tits:1974} concerning semisimple split algebraic groups/Lie groups, and has been topologized as Theorem 7.22 of \cite{HKM}.

\begin{definition}
Starting with a two-spherical Dynkin diagram $\Gamma$, for each vertex $i$ of $\Gamma$ we define $$G_i:=\SL_2(\RR).$$ For each pair of vertices $\{i,j\}$ with $i\not=j$ we define $G_{\{i,j\}}$ to be the algebraically simply connected split real Lie group of rank two whose Dynkin diagram is the full labelled subgraph of $\Gamma$ on the vertices $i$ and $j$.
A fixed choice of a root basis provides natural inclusion maps $$G_i\hookrightarrow G_{\{i,j\}}.$$ Now let $\delta$ denote the diagram given by the $G_i$ and $G_{\{i,j\}}$ and these inclusion maps. The  \emph{Kac--Moody group} $G$ is now defined as the colimit of $\delta$ in the category of topological groups and, by \cite{HKM}, turns out to be a Hausdorff topological group.

In case $G$ admits a finite Weyl group it is abstractly isomorphic to an algebraically simply connected semisimple split real Lie group and the thus defined topology coincides with its Lie group topology by an open-mapping theorem, cf.\ Proposition~2.2 of \cite{GGH} and Corollary 7.16 of \cite{HKM}. 

Each of the standard rank one subgroups $G_i\cong\SL_2(\RR)$ admits a continuous involution $\theta_i$ induced by $g\mapsto (g^T)^{-1}$. By universality these involutions $\theta_i$ extend uniquely to an involution $$\theta:G\to G,$$ called the \emph{Cartan--Chevalley involution} of $G$. We denote the set of fixed points, which is a subgroup of $G$, by $$K:=G^\theta=\{k\in G\mid \theta(k)=k\}.$$
\end{definition}

\section{Reflection Spaces and Symmetric Spaces}
We now recall some of the facts about symmetric spaces from \cite{FHHK} that are needed for the understanding of our theorems and their proofs.
One of the key ingredients is the following statement, which we will refer to as the \emph{conjugation formula}. It is lemma 2.4 in \cite{FHHK}; see also \cite{Caprace}.

\begin{lemma}\label{conjugateformula}
Let $X$ be a reflection space and $x,y\in X$. Let $\alpha\in\Aut(X)$ be an automorphism of $X$. Then $${}^\alpha(\s_y)=\s_{\alpha(y)}.$$
In particular $${}^{\s_x}(\s_y)=\s_{x.y}.$$
Here ${}^hg$ denotes the left-conjugation of $g$ by $h$.
\end{lemma}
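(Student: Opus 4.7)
The plan is a direct unwinding of the definition of conjugation, using only that $\alpha$ is a morphism of reflection spaces (hence commutes with the reflection map $\mu$) and that $\alpha$ is bijective. First I would evaluate ${}^\alpha(\s_y) = \alpha \circ \s_y \circ \alpha^{-1}$ on an arbitrary test point $z \in X$, which produces $\alpha(y . \alpha^{-1}(z))$.

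Next I would push $\alpha$ across the reflection. Since $\alpha$ is a morphism, $\alpha(y . \alpha^{-1}(z)) = \alpha(y) . z$, and this is by definition $\s_{\alpha(y)}(z)$. Since $z$ was arbitrary, ${}^\alpha(\s_y) = \s_{\alpha(y)}$ as functions on $X$.

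For the ``in particular'' clause I would specialize $\alpha = \s_x$, which is itself an automorphism of $X$: indeed $\s_x$ is a morphism by axiom (RS3), and it is its own inverse by (RS2), so bijectivity is automatic. Substituting yields ${}^{\s_x}(\s_y) = \s_{\s_x(y)} = \s_{x.y}$.

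There is no genuine obstacle, but the one point to keep in mind is that the computation uses compatibility of $\alpha$ with $\mu$ together with the fact that $\alpha^{-1}$ is well-defined on $X$ (so that the evaluation at $\alpha^{-1}(z)$ makes sense) -- both of which are given the moment $\alpha \in \Aut(X)$. In essence this lemma just repackages, group-theoretically inside $\Aut(X)$, the observation already made in the introduction that any morphism $\phi$ of reflection spaces sends the elementary reflection $\s_x$ to the elementary reflection $\s_{\phi(x)}$ at the image point.
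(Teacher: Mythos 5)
Your proof is correct and complete: the computation $\alpha(\s_y(\alpha^{-1}(z))) = \alpha(y.\alpha^{-1}(z)) = \alpha(y).z = \s_{\alpha(y)}(z)$ is exactly the standard argument, and your justification that $\s_x\in\Aut(X)$ via (RS3) and (RS2) correctly grounds the ``in particular'' clause. The paper itself gives no proof, citing Lemma~2.4 of \cite{FHHK} instead, but your argument is the one intended there and nothing is missing.
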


This lemma makes a very strong statement about the relations that hold in the groups $G(X)$ or $\Trans(X)$. In a sense all the computations done in this article can be carried out in the group that is freely generated by the symbols $\s_x$, $x\in X$, modulo the relators $\s_x^2$ and ${}^{\s_x}(\s_y){\s_{x.y}}^{-1}$.\\

A first application of the conjugation formula is the statement that for an embedded reflection space $\iota : (Y,o) \hookrightarrow (X,o)$ of $X$ the natural map $\restr{\Trans(X)}{Y}\to \Trans(Y)$ induced by $\s_{\iota(y)} \circ \s_o \mapsto \s_y \circ \s_o$, $y \in Y$, has central kernel:
\begin{lemma}\label{centralextensionlemma}
Let $Y\subset X$ be a subspace of the reflection space $X$. Then the natural map $\restr{\Trans(X)}{Y}\to \Trans(Y)$ has central kernel.
\end{lemma}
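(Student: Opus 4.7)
The plan is to unpack the natural map $\pi : \restr{\Trans(X)}{Y} \to \Trans(Y)$ as a genuine restriction map, and then to show that any element of its kernel fixes $\iota(Y)$ pointwise; centrality in $\restr{\Trans(X)}{Y}$ then falls out of the conjugation formula applied to the generators.

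First I would verify that the map $\pi$ is well-defined. Each generator $\s_{\iota(y)}\circ\s_o$ of $\restr{\Trans(X)}{Y}$ preserves $\iota(Y)\subset X$ setwise: since $\iota$ is a morphism of (pointed) reflection spaces, $\s_o(\iota(y'))=\iota(o).\iota(y')=\iota(o.y')\in\iota(Y)$, and similarly for $\s_{\iota(y)}$. Consequently the whole subgroup $\restr{\Trans(X)}{Y}$ acts by automorphisms on $\iota(Y)$. Identifying $\iota(Y)$ with $Y$ via $\iota$, this yields a homomorphism $\restr{\Trans(X)}{Y}\to\Aut(Y)$ whose image is generated by $\s_y\circ\s_o$, $y\in Y$, hence lies in $\Trans(Y)$; this homomorphism is $\pi$.

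Now take $k\in\ker\pi$. By construction $\pi(k)$ is the automorphism of $Y$ induced by $k$ via the identification $\iota$, so $k$ fixes every point of $\iota(Y)\subseteq X$. Applying the conjugation formula (Lemma~\ref{conjugateformula}) for each $y\in Y$,
$$k\,\s_{\iota(y)}\,k^{-1}=\s_{k(\iota(y))}=\s_{\iota(y)},$$
so $k$ commutes with $\s_{\iota(y)}$ for every $y\in Y$. In particular $k$ commutes with $\s_o=\s_{\iota(o)}$, and therefore with the products $\s_{\iota(y)}\circ\s_o=\tr_{\iota(y)}$ that generate $\restr{\Trans(X)}{Y}$. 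Hence $k$ is central, proving the claim.

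The only subtle point I anticipate is the well-definedness of $\pi$: one cannot simply declare a group homomorphism by prescribing its values on a generating set without checking compatibility with all relations. The restriction-to-$\iota(Y)$ viewpoint described above side-steps this entirely, since restriction of automorphisms along a stable subset is automatically a homomorphism, and one merely has to read off what it does to generators. Beyond this, the argument is a direct application of the conjugation formula.
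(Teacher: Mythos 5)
Your proof is correct and follows essentially the same route as the paper: a kernel element fixes $\iota(Y)$ pointwise, so the conjugation formula forces it to commute with every $\s_{\iota(y)}$ and hence with the generating transvections $\tr_{\iota(y)}$. The extra care you take in establishing well-definedness of $\pi$ as a restriction homomorphism is a detail the paper leaves implicit, and it is handled correctly.
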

\begin{proof}
 An element $g\in\restr{\Trans(X)}{Y}$ in the kernel of that map acts trivially on the subspace $Y\subset X$. So by the conjugation formula we get $${}^g(\s_y)=\s_{g(y)}=\s_y,$$ for all $y\in Y$, so $g$ centralizes the elements $\s_y \circ s_o$ for all $y \in Y$ and, hence, lies in the center of $\restr{\Trans(X)}{Y}$.\end{proof}

We conclude that for the proof of \cref{th1} we need only show that the respective groups are perfect. 

\smallskip
Another application of the conjugation formula is the following lemma, which makes use of axiom (RS4) and so only applies to symmetric spaces:
\begin{lemma}\label{centraliserprop} Let $X$ be a symmetric space and let $o\in X$. Let $H\leq \Aut(X)$ some subgroup. Then $$\Stab_H(o)=C_H(\s_o).$$\end{lemma}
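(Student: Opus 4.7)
The plan is to prove the two inclusions separately, with both directions flowing directly from the conjugation formula of \cref{conjugateformula}; the axiom (RS4) enters only in one direction.

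For the inclusion $\Stab_H(o)\subseteq C_H(\s_o)$, I would simply take $h\in H$ with $h(o)=o$ and apply the conjugation formula to obtain ${}^h(\s_o)=\s_{h(o)}=\s_o$, so that $h$ commutes with $\s_o$. This half does not use (RS4) and holds already for reflection spaces.

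For the reverse inclusion $C_H(\s_o)\subseteq\Stab_H(o)$, take $h\in H$ with ${}^h(\s_o)=\s_o$. Again by \cref{conjugateformula} this rewrites as $\s_{h(o)}=\s_o$, i.e. $h(o).y=o.y$ for every $y\in X$. Specialising to $y=h(o)$ and using (RS1) gives $h(o)=o.h(o)$, and then (RS4) forces $o=h(o)$, so $h\in\Stab_H(o)$.

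The only real subtlety is the very last step, where (RS4) is needed to rule out non-trivial fixed points of $\s_o$ other than $o$ itself; without (RS4) the equality $\s_{h(o)}=\s_o$ would not be enough to conclude $h(o)=o$, which is exactly why the lemma is stated for symmetric spaces rather than for general reflection spaces. No further machinery is required, and the argument is essentially one application of the conjugation formula in each direction.
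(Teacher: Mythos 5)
Your proof is correct and follows essentially the same route as the paper: one application of the conjugation formula in each direction, with (RS4) closing the argument in the harder inclusion. The only cosmetic difference is that you evaluate the identity $\s_{h(o)}=\s_o$ at the point $h(o)$ while the paper evaluates it at $o$; both choices combine (RS1) and (RS4) in the same way.
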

\begin{proof}
For any $g\in\Stab_H(o)$, we have that $g(o)=o$. This means with the conjugation formula that $${}^g(\s_o)=\s_{g(o)}=\s_o,$$ so $g\in C_H(\s_o)$. Now if $g\in C_H(\s_o)$, we have $\s_{g(o)}={}^g(\s_o)=\s_o$ and hence $$g(o).o=\s_{g(o)}(o)=\s_o(o)=o.o=o,$$ and with (RS4) it follows $g(o)=o$, so that $g\in\Stab_H(o)$.
\end{proof}

For the understanding of the construction of a topological reflection space from a topological group $G$ together with a continuous involution $\theta\in\Aut(G)$, we want to recall proposition 4.1 of \cite{FHHK}, which we refer to as the \emph{$G/K$-construction} or the \emph{$G/K$-functor}:
\begin{proposition}\label{G/Kconstruction} Let $G$ be a topological group and $\theta\in\Aut(G)$ an involution. We denote $K:=G^\theta$ the fixed point set of $\theta$ and $$\begin{array}{rcl}\tau:G&\to& G\\ g&\mapsto& g\theta(g)^{-1},\end{array}$$ the so-called \emph{twist map}. Then the coset space $G/K$ becomes a topological reflection space with the reflection map 
$$\begin{array}{rcl}\mu: G/K\times G/K &\to& G/K \\ (gK,hK)&\mapsto& \tau(g)\theta(h)K.\end{array}$$
If furthermore $K\cap\tau(G)=\{e\}$, then $X$ is also a symmetric space.
\end{proposition}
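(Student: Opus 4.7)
The plan is to check the three reflection-space axioms by direct algebraic manipulation, using as the principal tool the identity $\theta(\tau(g)) = \tau(g)^{-1}$, which follows at once from the definition of $\tau$ and $\theta^2 = \id$. The additional axiom (RS4) in the symmetric-space case requires a small further trick that invokes the hypothesis $K \cap \tau(G) = \{e\}$.

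First I would verify that $\mu$ is well defined. For $k_1, k_2 \in K$ one has $\tau(g k_1) = g k_1 \theta(k_1)^{-1}\theta(g)^{-1} = g \theta(g)^{-1} = \tau(g)$ using $\theta(k_1) = k_1$, while $\theta(h k_2) = \theta(h) k_2$, which is absorbed into the coset $\theta(h)K$. Continuity of $\mu$ then follows from continuity of group multiplication and inversion, of the involution $\theta$, and of the quotient projection $G \to G/K$.

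Next I would verify (RS1)--(RS3) by substitution and simplification. Axiom (RS1) is immediate: $\tau(g)\theta(g)K = g\theta(g)^{-1}\theta(g)K = gK$. For (RS2) one computes $gK.(gK.hK) = \tau(g)\theta(\tau(g))\theta^2(h)K = \tau(g)\tau(g)^{-1}hK = hK$. Axiom (RS3) is the longest calculation: expanding both $gK.(hK.fK)$ and $(gK.hK).(gK.fK)$ and repeatedly applying $\theta(\tau(g)) = \tau(g)^{-1}$ together with the identity $\theta(h)h^{-1} = \tau(h)^{-1}$, both sides collapse to $\tau(g)\tau(h)^{-1} f K$. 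This is the main bookkeeping obstacle, but contains no conceptual difficulty.

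Finally, for (RS4) under the hypothesis $K \cap \tau(G) = \{e\}$, I would rewrite the assumption $\mu(gK, hK) = hK$ as $h^{-1}\tau(g)\theta(h) \in K$ and observe by a direct calculation that $\tau(h^{-1}g) = h^{-1}g\theta(g)^{-1}\theta(h) = h^{-1}\tau(g)\theta(h)$. Hence this element lies in $K \cap \tau(G)$ and so equals $e$; since $\tau$ descends to an injection $G/K \hookrightarrow G$ (because $\tau(g) = \tau(h)$ iff $h^{-1}g \in G^\theta = K$), this forces $h^{-1}g \in K$, i.e.\ $gK = hK$. The conceptual subtlety of (RS4) lies in recognizing $h^{-1}\tau(g)\theta(h)$ as $\tau(h^{-1}g)$; once that is spotted, the hypothesis on $K \cap \tau(G)$ finishes the argument immediately.
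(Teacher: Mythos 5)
Your proof is correct; the paper itself does not prove this proposition but simply recalls it as Proposition 4.1 of \cite{FHHK}, and your verification --- well-definedness via $\tau(gk)=\tau(g)$ and $\theta(hk)=\theta(h)k$, the key identity $\theta(\tau(g))=\tau(g)^{-1}$, the collapse of both sides of (RS3) to $\tau(g)\tau(h)^{-1}fK$, and the recognition of $h^{-1}\tau(g)\theta(h)$ as $\tau(h^{-1}g)$ for (RS4) --- is the standard direct computation establishing it. The only step worth spelling out further is continuity: the map $G\times G\to G/K$, $(g,h)\mapsto \tau(g)\theta(h)K$, is continuous, and it descends to a continuous map on $G/K\times G/K$ because the projection $G\to G/K$ is an \emph{open} quotient map, so the product of two copies of it is again a quotient map (this is not automatic for arbitrary quotient maps).
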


Now if we have two groups $G$ and $H$ and involutions $\theta_G\in\Aut(G)$ and $\theta_H\in\Aut(H)$ satisfying the conditions of \cref{G/Kconstruction}, and a group homomorphism $\varphi:G\to H$ respecting the involutions, $$\phi\circ\theta_G=\theta_H\circ \phi,$$ then there is an induced map $G/K_G\to H/K_H$, which is a morphism of symmetric spaces. So the $G/K$-construction can be thought of as a functor from the category of topological groups with involutions to the category of pointed topological reflection spaces, the basepoint being the coset $o=K=eK\in G/K$.

This readily applies to the situation of the defining diagram $\delta$ of a Kac--Moody group $G$, yielding a diagram $\delta'$ of topological symmetric spaces.

\section{An extension theorem for embedded Riemannian symmetric spaces}

As pointed out in the section above (\cref{centralextensionlemma}), for proving \cref{th1}, we need only show that the groups in question are perfect. To accomplish this, we intend to identify a generating system such that taking suitable commutators reproduces this generating system. The proof is divided into two steps: First we are going to prove the theorem for the special case that the embedded Riemannian symmetric space is the hyperbolic plane, and then we extend this to the general case.
The following proposition produces generating systems for the groups we are looking at:

\begin{proposition}\label{generatingprop}
Let $G$, $K$ be as in the situation of \cref{G/Kconstruction}. Moreover, let $(X,o)\supset (G/K,eK)$ be a pointed reflection space containing $G/K$.
Let $\{g_i, i\in I\}$ be a generating system of the group $G$. Furthermore, let $$T_i\in\restr{\Trans(X)}{G/K}, i\in I$$ be elements that act on $G/K$ via left multiplication with $g_i$:$$T_i( hK)=(g_ih)K,$$ for all $i\in I$ and $h\in G$. Then the set $\{T_i, {}^{\s_o}(T_i^{-1}) \}$ is a generating system of the group $\restr{\Trans(X)}{G/K}$.
\end{proposition}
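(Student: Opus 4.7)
The plan is to show that every elementary transvection $\tr_{gK}$ already lies in $H := \langle T_i,\, {}^{\s_o}(T_i^{-1}) : i \in I\rangle$, since the $\tr_{gK}$ generate $\restr{\Trans(X)}{G/K}$. The central insight around which the proof is organized is that the conjugation formula (\cref{conjugateformula}) upgrades the given prescription of \emph{how} $T_i$ acts on $G/K$ into an exact algebraic identity in $\Aut(X)$, avoiding any loss by central kernels.

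First, for an arbitrary $g \in G$, I will fix a word $g = g_{i_1}^{\epsilon_1} \cdots g_{i_n}^{\epsilon_n}$ in the chosen generators and set $T_g := T_{i_1}^{\epsilon_1} \cdots T_{i_n}^{\epsilon_n}$. A direct induction on $n$ using the hypothesis $T_i(hK) = g_i hK$ shows that $T_g$ acts on $G/K$ as left multiplication by $g$; in particular $T_g(o) = gK$.

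Next, since $T_g \in \Aut(X)$, the conjugation formula yields $T_g\,\s_o\,T_g^{-1} = \s_{T_g(o)} = \s_{gK}$, from which
$$\tr_{gK} \;=\; \s_{gK}\s_o \;=\; T_g\,\s_o\,T_g^{-1}\,\s_o \;=\; T_g \cdot {}^{\s_o}(T_g^{-1}).$$
The element $T_g$ is visibly a word in the $T_i$'s, and ${}^{\s_o}(T_g^{-1})$ is a product of elements ${}^{\s_o}(T_i^{-1})$ and their inverses ${}^{\s_o}(T_i) = ({}^{\s_o}(T_i^{-1}))^{-1}$. Hence $\tr_{gK} \in H$ for every $g \in G$, so $\restr{\Trans(X)}{G/K} \subseteq H$. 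The reverse inclusion is immediate once one observes that $\restr{\Trans(X)}{G/K}$ is stable under conjugation by $\s_o$: by the conjugation formula, ${}^{\s_o}(\s_{hK}\s_o) = (\s_{hK}\s_o)^{-1}$ for each $h \in G$.

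The step most likely to trip one up is the invocation of the conjugation formula. A first attempt might try only to verify that $T_g \cdot {}^{\s_o}(T_g^{-1})$ and $\tr_{gK}$ agree on $G/K$, but by \cref{centralextensionlemma} this would only give equality up to a central element of $\restr{\Trans(X)}{G/K}$, leaving a potentially non-trivial central obstruction to absorb. The conjugation formula circumvents this by producing the equality not merely of actions but of the actual group elements in $\Aut(X)$, which is precisely what lets us conclude that $\tr_{gK}$ itself --- and not just its image modulo the center --- lies in $H$.
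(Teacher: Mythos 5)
Your proof is correct and follows essentially the same route as the paper: write $g$ as a word in the $g_i$, form the corresponding word $T_g$ in the $T_i$, and use the conjugation formula to get $\tr_{gK}=T_g\,\s_o\,T_g^{-1}\,\s_o=T_g\cdot{}^{\s_o}(T_g^{-1})$. Your closing remark correctly identifies why the conjugation formula (rather than mere agreement of actions on $G/K$) is the essential point, and your note on the reverse inclusion fills in a detail the paper leaves implicit.
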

\begin{proof} Since the $g_i$ are a generating system of $G$, any $h$ can be written as a product $$h=g_{i_1}^{\epsilon_1}\cdots g_{i_n}^{\epsilon_n}.$$ Now let us define $$T:=T_{i_1}^{\epsilon_1}\cdots T_{i_n}^{\epsilon_n}.$$
We see with the conjugation formula, that for the elementary transvection defined by $hK$, we have $$\tr_{hK}=\s_{hK}\circ \s_o=T\s_oT^{-1}\s_o\in\langle T_i, {}^{\s_o}(T_i^{-1}), i\in I\rangle. \qedhere $$
\end{proof}

Finding elements like the $T_i$ in the preceding proposition is not necessarily an easy task. But in the case of the hyperbolic plane $G/K\cong\Hy$ with $G=\SL_2(\RR)$, $K=\SO(2)$ this is possible; the $g_i\in \SL_2(\RR)$ we are aiming for are the shear matrices $$\begin{pmatrix}1&t\\0&1\end{pmatrix},\begin{pmatrix}1&0\\t&1\end{pmatrix}, t\in\RR\setminus\{0\},$$ which are well-known to generate $\SL_2(\RR)$.
An elementary transvection $\tr_{gK}$ acts on an element $hK$ in the following way: $$\tr_{gK}(hK)=\s_{gK}(\s_{eK}(hK))=\s_{gK}(\tau(e)\theta(h)K)=(\tau(g)h)K.$$
In the case of $G=\SL_2(\RR)$ the map $\tau$ sends $g$ to $gg^T$ and in the case that $g$ is a symmetric matrix this is just $g^2$. So in order to find the $T_i$ we wish to express the shear matrices as products of positively definite symmetric matrices, as these allow for taking square roots. By \cite{Ballantine} every unipotent matrix in $\SL_2(\RR)$ is a product of at most three symmetric positive definite matrices; the following lemma gives explicit such symmetric positive matrices.

\begin{lemma}\label{matrixlemma}
The matrices $$\begin{array}{rcl}
A(t)&:=&\frac{1}{\sqrt 2}\begin{pmatrix}3t&-1\\-1&\frac{1}{t}\end{pmatrix},\\
B(t)&:=&\begin{pmatrix}\frac{1}{t}&1\\1&2t\end{pmatrix},\\
C&:=&\begin{pmatrix}\frac{1}{\sqrt2}&0\\0&\sqrt2\end{pmatrix}
\end{array}$$ are symmetric, positive definite and of determinant $1$ for all $t\in\RR\setminus\{0\}$. Furthermore $$\begin{array}{rcl}
A(t)B(t)C&=&\begin{pmatrix}1&t\\0&1\end{pmatrix},\\
CB(t)A(t)&=&\begin{pmatrix}1&0\\t&1\end{pmatrix},\\
CA(t)C&=&A(t/2), \\
C^{-1}B(t)C^{-1}&=&B(t/2).\end{array}$$
\end{lemma}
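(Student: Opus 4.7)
The lemma is a sequence of explicit claims about $2\times 2$ matrices, so my plan is direct verification, organized so that the underlying structure is visible. Symmetry is immediate from inspection of the displayed forms. For the determinants, expanding each $2\times 2$ determinant yields $\det A(t) = \tfrac{1}{2}(3t\cdot\tfrac{1}{t}-1)=1$, $\det B(t) = \tfrac{1}{t}\cdot 2t-1 = 1$ and $\det C = 1$; in particular the prefactor $\tfrac{1}{\sqrt 2}$ in $A(t)$ is calibrated exactly so that the determinant comes out to $1$.

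For positive definiteness I would use the standard $2\times 2$ criterion that a symmetric matrix is positive definite if and only if its top-left entry and its determinant are both positive. Since each determinant has already been shown to equal $1$, positive definiteness reduces to checking that the top-left entries $\tfrac{3t}{\sqrt 2}$, $\tfrac{1}{t}$, $\tfrac{1}{\sqrt 2}$ are positive; this holds for $t>0$, which is the range needed in the sequel since the matrices will subsequently be square-rooted.

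The four product identities are then pure matrix multiplication. The identities $A(t)B(t)C = \bigl(\begin{smallmatrix}1 & t \\ 0 & 1\end{smallmatrix}\bigr)$ and $CB(t)A(t) = \bigl(\begin{smallmatrix}1 & 0 \\ t & 1\end{smallmatrix}\bigr)$ are verified by computing the two-factor products $A(t)B(t)$ and $B(t)A(t)$ and then multiplying by $C$ on the correct side. The remaining two identities $CA(t)C = A(t/2)$ and $C^{-1}B(t)C^{-1}=B(t/2)$ are transparent: since $C$ is diagonal, conjugating a matrix by $C^{\pm 1}$ rescales diagonal entries by fixed positive factors (which cancel since $\det C = 1$) and off-diagonal entries by a single fixed factor, so one reads off the result by inspection. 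There is no real obstacle here; the content of the lemma is not its proof but the cleverness of having guessed the factorizations, in the spirit of Ballantine's theorem that every unipotent element of $\SL_2(\RR)$ decomposes as a product of at most three positive definite symmetric matrices.
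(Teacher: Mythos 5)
Your direct verification is correct and matches the paper's approach exactly --- the paper's entire proof is the sentence ``This is a straightforward calculation.'' Two small remarks: (i) you are right that positive definiteness only holds for $t>0$ (for $t<0$ the determinants are still $1$ but the top-left entries of $A(t)$ and $B(t)$ are negative, so those matrices are negative definite), so the lemma's ``for all $t\in\RR\setminus\{0\}$'' overstates that part of the claim, and the restriction to $t>0$ is indeed what the square-root construction in the sequel requires; (ii) $CA(t)C$ and $C^{-1}B(t)C^{-1}$ are congruences rather than conjugations, and it is the \emph{off-diagonal} entries whose scale factors cancel via $C_{11}C_{22}=\det C=1$, while the diagonal entries are scaled by $C_{11}^{2}=1/2$ and $C_{22}^{2}=2$ (you state this the other way around) --- which is precisely what turns $A(t)$ into $A(t/2)$ and $B(t)$ into $B(t/2)$.
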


\begin{proof}
This is a straightforward calculation.
\end{proof}

We are going to use the last two equations of the lemma for a computation of relations in the group $\restr{\Trans(X)}{\Hy}$ that will allow us to establish its perfectness.

The next lemma produces two sets of generators for the group $\restr{\Trans(X)}{\Hy}$ acting via left multiplication with shear matrices on an embedded hyperbolic place $\Hy=\SL_2(\RR)/\SO(2) \hookrightarrow X$.
\begin{lemma}\label{generatorslemma} We denote by $a(t)$, $b(t)$ and $c$ the respective square roots of the matrices $A(t)$, $B(t)$ and $C$ of \cref{matrixlemma}. For brevity we write $[g]$ for the coset $g\SO(2)\in\Hy=\SL_2(\RR)/\SO(2)$. If $X\supset\Hy$ is a symmetric space containing $\Hy$, then both the sets
$$\begin{array}{ll} & \{x_+(t),x_-(t) \mid t\in\RR\setminus\{0\}\} \\
\text{and }& \{x_+(t/2)\cdot x_+(t)^{-1}, x_-(t)\cdot x_-(t/2)^{-1}\mid t\in\RR\setminus\{0\}\}, \end{array}$$
where $$\begin{array}{rcl}x_+(t)&:=&\tr_{[a(t)]}\circ \tr_{[b(t)]}\circ \tr_{[c]} \\ x_-(t)&:=&\tr_{[c]}\circ \tr_{[b(t)]}\circ \tr_{[a(t)]}\end{array}$$
are generating systems of $\restr{\Trans(X)}{\Hy}$.
\end{lemma}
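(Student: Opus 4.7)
The plan is to apply \cref{generatingprop} twice, using the two proposed families as the auxiliary elements $T_i$, and in each case to verify that the $\s_o$-conjugates of inverses adjoined by the proposition already lie in the subgroup generated by the original set.

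First I would compute how the building blocks act on $\Hy$. Via the $G/K$-functor with $\theta(g)=(g^T)^{-1}$, an elementary transvection $\tr_{[g]}$ acts on $\Hy = \SL_2(\RR)/\SO(2)$ by left multiplication by $\tau(g)=gg^T$, which for symmetric positive definite $g$ equals $g^2$. Combined with the first two identities of \cref{matrixlemma}, this shows $x_+(t)$ and $x_-(t)$ act by left multiplication with the upper and lower unipotent shear matrix with parameter $t$ respectively; hence $s_+(t) := x_+(t/2)x_+(t)^{-1}$ acts as the upper shear with parameter $-t/2$, and $s_-(t) := x_-(t)x_-(t/2)^{-1}$ as the lower shear with parameter $t/2$. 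Both families generate $\SL_2(\RR)$ as $t$ ranges over $\RR\setminus\{0\}$, so \cref{generatingprop} applies in either case.

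For the first system, \cref{generatingprop} yields $\{x_+(t), x_-(t), {}^{\s_o}(x_+(t)^{-1}), {}^{\s_o}(x_-(t)^{-1})\}$ as a generating set. Applying the identity ${}^{\s_o}(\tr_{[g]}^{-1})=\tr_{[g]}$ (immediate from $\s_o^2=\id$) factor by factor to $x_+(t)^{-1}=\tr_{[c]}^{-1}\tr_{[b(t)]}^{-1}\tr_{[a(t)]}^{-1}$ gives ${}^{\s_o}(x_+(t)^{-1}) = \tr_{[c]}\tr_{[b(t)]}\tr_{[a(t)]} = x_-(t)$, and symmetrically ${}^{\s_o}(x_-(t)^{-1}) = x_+(t)$. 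The $\s_o$-conjugates therefore duplicate existing generators, proving the claim for the first system.

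For the second system, the hardest step of the proof is again to show that ${}^{\s_o}(s_\pm(t)^{-1})$ lies in $\langle s_\pm \rangle$. A direct computation with ${}^{\s_o}(x_+(t)) = x_-(t)^{-1}$ and ${}^{\s_o}(x_+(t)^{-1}) = x_-(t)$ produces ${}^{\s_o}(s_+(t)^{-1}) = x_-(t)^{-1}x_-(t/2)$, which has the same two factors as $s_-(t)^{-1} = x_-(t/2)x_-(t)^{-1}$ but in the opposite order; the analogous statement holds for ${}^{\s_o}(s_-(t)^{-1})$. Both words act identically on $\Hy$, so by \cref{centralextensionlemma} they differ only by a central element of $\restr{\Trans(X)}{\Hy}$. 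I would close the argument by realising this central correction as a commutator $[s_-(t), s_-(t')]$ for a suitable $t'$: such a commutator automatically belongs to $\langle s_\pm \rangle$ and, because the $s_-(t)$ project to commuting lower shears in $\Trans(\Hy)=\PSL_2(\RR)$, is forced into the central kernel of $\restr{\Trans(X)}{\Hy}\to\Trans(\Hy)$ by \cref{centralextensionlemma}, thereby matching the central discrepancy identified above.
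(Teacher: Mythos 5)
Your computation of the actions on $\Hy$, your treatment of the first generating system, and your identification of the order-reversal problem for the second system are all correct — indeed you have isolated a subtlety that the paper's own proof passes over in silence (the paper records only $x_-(t)={}^{\s_o}(x_+(t)^{-1})$ and then invokes \cref{generatingprop} for both systems at once). The gap is in your last step. You correctly find that \cref{generatingprop} forces you to absorb ${}^{\s_o}(s_+(t)^{-1})=x_-(t)^{-1}x_-(t/2)$ into $\langle s_\pm\rangle$, and that this word differs from $s_-(t)^{-1}=x_-(t/2)x_-(t)^{-1}$ by the central element $[x_-(t)^{-1},x_-(t/2)]$. But you then propose to identify this element with a commutator $[s_-(t),s_-(t')]$ for ``a suitable $t'$'' on the sole grounds that both lie in the central kernel of $\restr{\Trans(X)}{\Hy}\to\Trans(\Hy)$. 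Two elements of that kernel need not coincide, you offer no computation pinning down $t'$, and nothing in your argument rules out, say, all $[s_-(t),s_-(t')]$ being trivial while the discrepancy is not. As written, the step is a non sequitur and the proof of the second assertion is not closed.

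The repair is short, and in fact the central correction vanishes. The elements $x_-(t)$ and $x_-(t/2)^2$ both act on $\Hy$ by left multiplication with $\begin{pmatrix}1&0\\t&1\end{pmatrix}$, so by (the argument proving) \cref{centralextensionlemma} they differ by a central element of $\restr{\Trans(X)}{\Hy}$; hence $x_-(t)$ commutes with $x_-(t/2)$, and
$${}^{\s_o}\bigl(s_+(t)^{-1}\bigr)=x_-(t)^{-1}x_-(t/2)=x_-(t/2)x_-(t)^{-1}=s_-(t)^{-1},$$
with the symmetric identity ${}^{\s_o}(s_-(t)^{-1})=s_+(t)^{-1}$. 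After this, \cref{generatingprop} applies to the second family exactly as to the first. (An alternative patch in the spirit of your centrality argument: the images of the $s_\pm(t)$ already generate $\Trans(\Hy)$, so $\restr{\Trans(X)}{\Hy}=\langle s_\pm\rangle\cdot Z$ with $Z$ the central kernel, whence every commutator of $\restr{\Trans(X)}{\Hy}$ lies in $\langle s_\pm\rangle$; your correction term is such a commutator.) With either patch your argument is complete and is essentially the paper's, carried out with the care the paper omits.
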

\begin{proof}
As noted before, the elementary transvections $\tr_{[a(t)]}$, $\tr_{[b(t)]}$, $\tr_{[c]}$, act by left multiplication with $A(t)$, $B(t)$ and $C$ respectively. But since $$A(t)B(t)C=\begin{pmatrix}1&t\\0&1\end{pmatrix}$$ by \cref{matrixlemma}, we get that $x_+(t)$ acts by left multiplication with  $\begin{pmatrix}1&t\\0&1\end{pmatrix}$.

Similarly $x_-(t)$ acts by left multiplication with $\begin{pmatrix}1&0\\t&1\end{pmatrix}$.

Finally the elements $x_+(t/2)\cdot x_+(t)^{-1}$ and $x_-(t)\cdot x_-(t/2)^{-1}$ act by left multiplication with the matrices $$\begin{array}{rc}&\begin{pmatrix}1&-t/2\\0&1\end{pmatrix} \\ \text{resp. } & \begin{pmatrix}1&0\\t/2&1\end{pmatrix}.\end{array}$$

Now for any elementary transvection $\tr_x$, we have that ${}^{\s_o}(\tr_x^{-1})=\tr_x$, so that $$x_-(t)={}^{\s_o}(x_+(t)^{-1}),$$
which proves the statement of the lemma after applying \cref{generatingprop}.
\end{proof}

We are now in a position to prove \cref{th1}. We will start with the situation of an embedded hyperbolic plane.
\begin{proof}[Proof of \cref{th1} for embedded hyperbolic planes.]
We claim that the following equalities hold:
$$\begin{array}{rcl}
[\tr_{[c]},x_+(t)]&=&x_+(t/2)\cdot x_+(t)^{-1},\\ %&x_+(t)^{\tr_{[c]}}\cdot x_+(t)^{-1}&=
{[x_-(t),\tr_{[c]}^{-1}]}&=&x_-(t)\cdot x_-(t/2)^{-1}. %&x_-(t)\cdot (x_-(t)^{-1})^{\tr_{[c]}^{-1}}&=
\end{array}$$
If we can prove these equalities, we have shown, by \cref{generatorslemma}, that the commutator subgroup of $\restr{\Trans(X)}{\Hy}$ contains a generating system and hence is perfect. Moreover, the latter equality follows from the former by inverting and conjugating with $\s_o$, so it suffices to prove the former.

Since $[\tr_{[c]},x_+(t)]={}^{\tr_{[c]}}x_+(t)x_+(t)^{-1}$ it suffices to show that ${}^{\tr_{[c]}}x_+(t)=x_+(t/2).$
But $$\begin{array}{rcl}
{}^{\tr_{[c]}}x_+(t)&=&\tr_{[c]}\circ \tr_{[a(t)]}\circ \tr_{[b(t)]} \\
&=& s_{[c]}\s_os_{[a(t)]}\s_os_{[b(t)]}\s_o\\
&=& s_{[c]}s_{[\theta {a(t)}]}s_{[c]}s_{[c]}s_{[b(t)]}\s_o \\
&=& s_{[(\tau c)a(t)]}\s_o\s_os_{[c]}s_{[b(t)]}s_{[c]}s_{[c]}\s_o \\
&=& s_{[(\tau c)a(t)]}\s_o\s_os_{(\tau c)\theta b(t)}\s_o\s_os_{[c]}\s_o \\
&=& s_{[(\tau c)a(t)]}\s_os_{[(\theta\tau c)b(t)]}\s_os_{[c]}\s_o \\
&=& s_{[c^2a(t)]}\s_os_{[c^{-2}b(t)]}\s_os_{[c]}\s_o.
\end{array}$$

So it remains to show that $[c^2a(t)]=[a(t/2)]$ and $[c^{-2}b(t)]=[b(t/2)]$, i.e. that
$$\begin{array}{rl}&
a(t/2)^{-1}c^2a(t)\in\SO(2)\\ \text{and }& b(t/2)^{-1}c^{-2}b(t)\in\SO(2).\end{array}$$ But after \cref{matrixlemma} we have $$a(t/2)^{-1}c^2a(t)\cdot(a(t/2)^{-1}c^2a(t))^T=a(t/2)^{-1}c^2a(t)^2c^2a(t/2)^2=a(t/2)^{-1}a(t/2)^2a(t/2)^{-1}=1,$$ and $$b(t/2)^{-1}c^{-2}b(t)\cdot(b(t/2)^{-1}c^{-2}b(t))^T=b(t/2)^{-1}c^{-2}b(t)^2c^{-2}b(t/2)^2=b(t/2)^{-1}b(t/2)^2b(t/2)^{-1}=1. \qedhere$$
\end{proof}

The general proof of \cref{th2} now follows from classical structure results about Riemannian symmetric spaces of non-compact type and semisimple Lie groups:

\begin{proof}[Proof of \cref{th2} for embedded Riemannian symmetric spaces.]
Since $Y$ is a Riemannian symmetric space of non-compact type it follows that $Y=G/K$ for $G$ a real semisimple Lie group (see \cite{Loos}). From Steinberg (see \cite{Steinberg}) we know that $G$ is generated by subgroups $G_\alpha\cong\SL_2(\RR)$, and that $K_\alpha:=K\cap G_\alpha\cong\SO(2)$. So $\Hy_\alpha:=G_\alpha/K_\alpha$ is a hyperbolic plane. The subgroups $\restr{\Trans(X)}{\Hy_\alpha}$ are closed under conjugation with $\s_o$ by the conjugation formula, since $o=eK$ lies in every $\Hy_\alpha$. This means that $\restr{\Trans(X)}{Y}$ is generated by the subgroups $\restr{\Trans(X)}{\Hy_\alpha}$, applying \cref{generatingprop}. But the $\restr{\Trans(X)}{\Hy_\alpha}$ are perfect, as we have already proven the theorem for embedded hyperbolic planes -- so $\restr{\Trans(X)}{Y}$ is perfect, too. 
\end{proof}

\section{A universal property of Riemannian symmetric spaces and unreduced Kac--Moody symmetric spaces}
Since, in the situation of \cref{th2}, the morphism $\restr{\Trans(X)}{Y}\to\Trans(Y)$ is a perfect central extension, the universal central extension of $\Trans(Y)$ acts on $X$ by transvections. This can be used to prove that the symmetric space $G/K$ for an algebraically simply connected semisimple split real Lie group or an algebraically simply connected two-spherical split real Kac--Moody group $G$ is the colimit in the category of pointed topological symmetric spaces of the induced diagram $\delta'$ of the diagram $\delta$ of fundamental subgroups of rank-$1$ and rank-$2$ for which $G=\colim(\delta)$, after applying the $G/K$-construction.

\begin{proof}[Proof of \cref{th3}]
  Applying the $G/K$-functor to the diagram $\delta\to G$, we see that $G/K$ is a cocone for the diagram $\delta'$, so only universality needs to be checked. Let $(X,o)$ be another cocone of $\delta'$, i.e.\ there are morphisms $G_i/K_i\to X$, $G_{\{i,j\}}/K_{\{i,j\}}\to X$, making the diagram $\delta'\to X$ commute. Let $p_i : \widetilde G_i \to G_i$ and $p_{i,j} : \widetilde G_{\{i,j\}} \to G_{\{i,j\}}$ be the universal central extensions. Then $G_i/K_i \cong  \widetilde G_i/\widetilde K_i$ and $G_{\{i,j\}}/K_{\{i,j\}} \cong \widetilde G_{\{i,j\}}/ \widetilde K_{\{i,j\}}$, where $\widetilde K_i := p_i^{-1}(K_i)$ and $\widetilde K_{\{i,j\}} :=p_{i,j}^{-1}(K_{\{i,j\}})$. 
By theorem~\cref{th2} the universal central extensions $\widetilde G_i$, $\widetilde G_{\{i,j\}}$ act on $X$ through transvections. Moreover, their subgroups $\widetilde K_i$ and $\widetilde K_{\{i,j\}}$ stabilize $o$.

We conclude that the colimit of the diagram $\widetilde \delta$ of these universal central extensions, which coincides with the universal central extension $\widetilde G$ of $G$ by Theorem~B of \cite{CapraceUniv}, acts by transvections on $X$; we denote this action by $$T:\widetilde G\to \Trans(X).$$
We denote the lift of the Cartan--Chevalley involution $\theta:G\to G$ to $\widetilde G$ by $\widetilde\theta$. Certainly, $\widetilde K:=\widetilde G^{\widetilde\theta}=p^{-1}(K)$, where $p:\widetilde G\to G$ is the canonical map, and $$\widetilde G/\widetilde K\cong G/K.$$
We now define the morphism $$\begin{array}{rcl}\widetilde G/\widetilde K&\to&X\\ g\widetilde K&\mapsto& T(g)(o).\end{array}$$
This is a well defined map, since $\widetilde K$ stabilises the base point $o$:

By \cref{centraliserprop} the stabiliser $\Stab_{\Trans(X)}(o)$ coincides with the centraliser $C_{\Trans(X)}(\s_o)$, so the fixed point set of the conjugation with $\s_o$. But since $\s_o(gK)=\tau(e)\theta(g)K=\theta(gK)$, the map $G/K\to G/K$ induced by $\theta$ coincides with $\s_o$. Hence so does the map $\widetilde G/\widetilde K\to\widetilde G/\widetilde K$ induced by $\widetilde\theta$. Hence $$T(\widetilde K)=C_{\Trans(X)}(\s_o)=\Stab_{\Trans(X)}(o).$$

To prove uniqueness of the morphism $G/K\to X$, we note that $\SL_2(\RR)$ is generated by symmetric, positive definite matrices. For any such matrix $g$, we have $\tau(g)=g^2$ -- now taking the square root $h$ of $g$, we have $\tau(h)=g$. That means we can write any $x\in\SL_2(\RR)$ as a product
$$x=\tau(h_1)\cdots \tau(h_n),$$ for some $n\in\NN$. But since $G$ is generated by subgroups isomorphic to $\SL_2(\RR)$, this also holds for any $x\in G$ with the $h_j$ in suitable $G_i\cong\SL_2(\RR)$. But this shows that in the symmetric space $G/K$, we have $$xK=h_1.(o.(h_2.(o\dots h_n)\dots)),$$ i.e. that $G/K$ is generated by the subspaces $G_i/K_i$. But the image of the morphism $G/K\to X$ on these subspaces is already determined.
\end{proof}

\end{document}